\documentclass[12pt]{article}
\usepackage{amsmath,amssymb,amsthm,epsfig}
\setlength{\textwidth}{6.3in}
\setlength{\textheight}{9in}
\setlength{\oddsidemargin}{0in}
\setlength{\topmargin}{-0.3in}
\newtheorem{lemma}{Lemma}
\newtheorem{theorem}{Theorem}
\newcommand\hull{\mathop{\rm hull}}
\newcommand\rank{\mathop{\rm rank}}
\newcommand\range{\mathop{\rm Range}}
\newcommand\bz{{\bf 0}}
\renewcommand\b{{\bf b}}
\newcommand\e{{\bf e}}

\newcommand\g{{\bf g}}
\newcommand\h{{\bf h}}

\newcommand\q{{\bf q}}
\renewcommand\r{{\bf r}}
\newcommand\R{{\bf R}}
\newcommand\s{{\bf s}}
\renewcommand\t{{\bf t}}
\renewcommand\u{{\bf u}}
\renewcommand\v{{\bf v}}

\newcommand\x{{\bf x}}

\newcommand\eref[1]{$(\ref{#1})$}
\newcommand\sref[1]{Section~$\ref{#1}$}
\newcommand\lemref[1]{Lemma~$\ref{#1}$}

\newcommand\figref[1]{Fig.~$\ref{#1}$}
\title{On the complexity of nonnegative matrix factorization}
\author{Stephen A.~Vavasis\thanks{Department of Combinatorics and
Optimization,  University of Waterloo, 200 University Ave.~W., 
Waterloo, Ontario, N2L 3G1, Canada, {\tt vavasis@math.uwaterloo.ca}.
Supported in part by an NSERC award.}}
\begin{document}
\maketitle
\begin{abstract}
Nonnegative matrix factorization (NMF) has become a prominent technique for the
analysis of image databases, text databases and other information retrieval
and clustering applications.  In this report, we define an exact version
of NMF.  Then we establish several results about exact NMF:  (1) that it
is equivalent to a problem in polyhedral combinatorics; (2) that it
is NP-hard;  and (3) that a polynomial-time local search heuristic exists.
\end{abstract}
\section{Nonnegative matrix factorization}
Nonnegative matrix factorization (NMF) has emerged in the past decade
as a powerful tool for clustering data and finding features in
datasets.
Lee and Seung \cite{LeeSeung} showed that NMF can find features
in image databases, and Hofmann \cite{hofmann} showed that probabilistic
latent semantic analysis, a variant of NMF, can effectively
cluster documents according to their topics.
Cohen and Rothblum \cite{CohenRothblum} describe applications
for NMF in probability, quantum mechanics and other fields.

Nonnegative matrix factorization is defined as the following problem.
The input is $(A,k)$, where $A$ is an $m\times n$ matrix with nonnegative
entries, and $k$ is an integer such that $1\le k\le \min(m,n)$.
The output is a pair of matrices $(W,H)$ with
$W\in\R^{m\times k}$ and $H\in\R^{k\times n}$ such that $W$ and
$H$ both have nonnegative entries and such that
$A\approx WH$.  The precise sense in which $WH$ approximates $A$
may vary from one author to the next.  Furthermore, some authors
seek sparsity in either $W$ or $H$ or both.  Sparsity may be imposed
as a term in the objective function \cite{KimPark}.  

The algorithms proposed by 
\cite{hofmann,KimPark,LeeSeung}
and others for NMF have generally been based on  local
improvement heuristics.  Another class of heuristics is based on
greedy rank-one downdating \cite{greiner, bergman, ghodsi, gillis}.
No algorithm proposed in the literature comes with a guarantee of
optimality.  This suggests that solving NMF to optimality may be
a difficult problem, although to the best of our knowledge this
has never been established formally.  

The main purpose of this
paper is to provide the proof that NMF is NP-hard.  This paper 
considers a particular version of NMF that we call {\em exact NMF}, which
is defined as follows.  

\vspace{0.1in}
\underline{\bf EXACT NMF:} The input is 
a matrix $A\in\R^{m\times n}$ with nonnegative
entries whose rank is exactly $k$, $k\ge 1$.
The output is a pair of matrices $(W,H)$, where
$W\in\R^{m\times k}$ and $H\in\R^{k\times n}$,  $W$ and
$H$ both have nonnegative entries, and 
$A=WH$.  If no such $(W,H)$ exist, then the output is a statement
of nonexistence of a solution.  The decision version of 
EXACT NMF takes the same
input and gives as output
{\bf yes} if
such a $W$ and $H$ exists else it outputs {\bf no}.

\vspace{0.1in}
Implicit in the statement of exact NMF is an assumption
that the rank of $A$ is known.  If $A$ is specified as rational data,
then its rank may determined in polynomial time via reduction to
row-echelon form \cite{Edmonds}.  In practice, one would usually
prefer singular value decomposition to determine $\rank(A)$
\cite{GVL}.  

Observe that for any reasonable definition of the
approximation version of NMF (that is, the version described earlier
in which $\rank(A)$ is not constrained and in which
one requires $A\approx WH$ instead of exact equality), an optimal
algorithm when presented with an $A$ whose rank is exactly $k$ ought
to solve the exact NMF problem.  Thus, the ``standard'' NMF problem
using any norm is a generalization of EXACT NMF.
Therefore, any hardness result
that applies to exact NMF (such as our hardness result)
would presumably apply to most approximation
versions as well. 

A different generalization of EXACT NMF is the problem of nonnegative
rank determination due to Cohen and Rothblum, which asks,
given $A\in\R^{m\times n}$ with nonnegative entries, find
the minimum value of $k$ such that $A=WH$, $W\in\R^{m\times k}$,
$H\in\R^{k\times n}$, and $W,H$ have nonnegative entries.
Cohen and Rothblum give a super-exponential time algorithm
for this problem.  Since nonnegative rank
determination is a generalization of EXACT
NMF, our result shows that it
is also NP-hard.

The proof of NP-hardness of EXACT NMF
has two parts: In \sref{sec:poly} we show equivalence between EXACT NMF and
a problem in polyhedral combinatorics that we
call INTERMEDIATE SIMPLEX, and in \sref{sec:nphard} we
show the NP-hardness of this problem.  
A side result emerging from the proof of equivalence of EXACT NMF
to INTERMEDIATE
SIMPLEX is that
certain local-search heuristic for EXACT NMF can be solved with
linear programming (\sref{sec:local}).

\section{Equivalence to Intermediate Simplex}
\label{sec:poly}
In this section, we show an equivalence between 
the  EXACT NMF and
a problem in polyhedral combinatorics that we call INTERMEDIATE
SIMPLEX.  Although the focus in this section is on the decision
version of these problems, it is apparent from the proofs that
the search-versions could also be reduced to each other.  The reductions
use a number of arithmetic operations polynomial in $m$ and $n$ and
are therefore polynomial-time for both the usual Turing machine
model and the real-number model of Blum et al.~\cite{Blumetal}.

A problem related to INTERMEDIATE SIMPLEX 
was proposed by Cohen and Rothblum \cite{CohenRothblum} and
show to be
equivalent to nonnegative rank determination.  Therefore,
their results to some extent
imply the results of this section.  Nonetheless, we
present the equivalence here in order to provide detail for
our claim that all reductions are polynomial time.

The equivalence is shown in three steps by first showing an equivalence
to a problem  denoted P1.

\vspace{0.1in}
\underline{\bf P1}: Given matrices $W_0\in\R^{m\times k}$ and
$H_0\in\R^{k\times n}$ such that each has rank $k$ and such that
all entries of $W_0H_0$ are nonnegative, does there exist a 
nonsingular matrix $Q\in\R^{k\times k}$ such that
$W_0Q^{-1}$ and $QH_0$ both have all nonnegative entries?

\begin{theorem}
There is a polynomial-time reduction from EXACT NMF to P1 
and vice versa.
\end{theorem}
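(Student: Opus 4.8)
The plan is to set up an explicit change-of-basis correspondence between nonnegative factorizations $A=WH$ and the matrices $Q$ sought in P1, and then run it in both directions. The one fact I would isolate at the outset is a rank rigidity observation: if $A$ has rank exactly $k$ and $A=WH$ with $W\in\R^{m\times k}$, $H\in\R^{k\times n}$, then $\rank W=\rank H=k$ (immediate from $k=\rank A=\rank(WH)\le\min(\rank W,\rank H)\le k$), and hence $\range(W)=\range(A)$; dually, if $W_0$ has full column rank $k$ and $H_0$ full row rank $k$, then $W_0$ is injective as a linear map and $\rank(W_0H_0)=\rank H_0=k$. Everything else is linear algebra organized around this fact.

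For the reduction from EXACT NMF to P1: given $A$ of rank $k$, I would compute in polynomial time (row-echelon reduction over the rationals, \cite{Edmonds}) a set of $k$ linearly independent columns of $A$, assemble them as $W_0\in\R^{m\times k}$, and back-solve $A=W_0H_0$ for $H_0\in\R^{k\times n}$; then $W_0,H_0$ both have rank $k$ and $W_0H_0=A\ge\bz$, so $(W_0,H_0)$ is a legal P1 instance. A yes-certificate $Q$ for P1 yields $W:=W_0Q^{-1}\ge\bz$ and $H:=QH_0\ge\bz$ with $WH=W_0Q^{-1}QH_0=A$, so EXACT NMF answers yes. Conversely, from any nonnegative $A=WH$ the rigidity fact gives $\range(W)=\range(A)=\range(W_0)$, so $W_0=WQ$ for a unique nonsingular $Q\in\R^{k\times k}$; then $W_0Q^{-1}=W\ge\bz$, and cancelling the full-column-rank $W_0$ from $W_0H_0=A=WH=W_0Q^{-1}H$ gives $QH_0=H\ge\bz$, so P1 answers yes.

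For the reduction from P1 to EXACT NMF: given $(W_0,H_0)$ as in P1, I would set $A:=W_0H_0$ (polynomial time, one matrix product); the dual half of the rigidity fact gives $\rank A=k$ exactly, so $A$ is a legal EXACT NMF instance, and the correspondence of the previous paragraph shows the two yes/no answers coincide. I would close by observing that both maps are solution-preserving in the obvious way — one reads $(W_0Q^{-1},QH_0)$ off of $Q$, and recovers $Q$ from $(W,H)$ by solving the full-rank (hence uniquely solvable) system $W_0=WQ$ — so the same reductions also work for the search versions.

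I do not anticipate a genuine obstacle here; the only place demanding care is the rank bookkeeping: confirming that the back-substituted $H_0$ in the first reduction really has rank $k$, and that $W_0H_0$ has rank exactly $k$ rather than smaller in the second. Both follow from the stated full-rank hypotheses on $W_0$ and $H_0$, but they are the hypotheses without which the constructed instances would fail to be legal, so they are worth verifying explicitly.
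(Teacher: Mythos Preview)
Your proposal is correct and follows essentially the same approach as the paper: construct a rank-$k$ factorization $A=W_0H_0$ in polynomial time, then use the equality $\range(W)=\range(A)=\range(W_0)$ (forced by rank rigidity) to set up a nonsingular change of basis $Q$ between any nonnegative factorization and $(W_0,H_0)$, with the reverse reduction given by $A:=W_0H_0$. The only cosmetic differences are your concrete choice of $W_0$ as $k$ independent columns of $A$ and your direct definition of $Q$ via $W_0=WQ$, whereas the paper introduces $P,Q$ separately and then shows $P=Q^{-1}$.
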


\begin{proof}
First we demonstrate the reduction of EXACT NMF to P1.
Suppose that we have an NMF instance, that is, a nonnegative
matrix $A$ of rank exactly $k$.  In polynomial time (using, e.g.,
reduction to row-echelon form) one can factor $A=W_0H_0$
such that $W_0\in\R^{m\times k}$ and $H_0\in\R^{k\times n}$.  (This
factorization does not solve exact NMF since the signs of the entries
of $W_0$ and $H_0$ are unknown.)  We claim that the original instance
of EXACT NMF is a yes-instance iff the instance of P1 is a 
yes-instance.  For one direction, suppose the instance of EXACT NMF
is a yes-instance, and
suppose $W,H$ are solutions
to exact NMF.  Then clearly $\range(A)=\range(W)=\range(W_0)$, which
is a dimension-$k$ subspace of $\R^n$, and similarly
$\range(A^T)=\range(H^T)=\range(H_0^T)$.  This means that there exist
two nonsingular $k\times k$ nonsingular matrices, say $P,Q$, such
that $W=W_0P$ and $H=QH_0$.  Thus, the equation $WH=W_0H_0$ may
be rewritten as $W_0PQH_0=W_0H_0$.  Notice that $W_0$ has a left
inverse and $H_0$ has a right-inverse since $W_0$ has full column
rank and $H_0$ has full row rank.  Thus, the previous equation
simplifies to $PQ=I$ (where $I$ denotes the $k\times k$ identity
matrix), i.e., $P=Q^{-1}$. Thus, $W_0Q^{-1}$ and $QH_0$ both have
nonnegative entries, so the instance of P1 is a yes-instance.
Conversely, suppose the instance of P1 is a yes-instance.  Then
there exists $Q$ such that $W=W_0Q^{-1}$ and $H=QH_0$ both have
all nonnegative entries, and $WH=W_0H_0=A$, so the instance of
exact NMF is a yes-instance.

For the opposite reduction, suppose we start with an instance
$(W_0,H_0)$ of P1.  Let $A=W_0H_0$; then $A$ is nonnegative
and has rank $k$.  We claim
that the instance of $A$ is a yes-instance if and only if the instance
of P1 is a yes-instance.  The proof uses essentially the same arguments
as in the previous paragraph.
\end{proof}

In order to simplify the main proof in this section, it is helpful
to define a slightly restricted version of P1 as follows by requiring
the last column of $W_0$ to be all 1's:

\vspace{0.1in}
\underline{\bf RESTRICTED P1}: 
Given matrices  $W_0\in\R^{m\times k}$ and
$H_0\in\R^{k\times n}$ such that (1) each has rank $k$;
(2) all entries of $W_0H_0$ are nonnegative; and (3)
the last column of $W_0$ is all 1's,
does there exists a 
nonsingular matrix $Q\in\R^{k\times k}$ such that
$W_0Q^{-1}$ and $QH_0$ both have all nonnegative entries?

\begin{theorem} There is a polynomial-time reduction from P1
to RESTRICTED P1 and vice versa.
\end{theorem}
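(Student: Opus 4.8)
The plan is to treat the two directions separately. The reduction from RESTRICTED P1 to P1 is immediate: an instance of RESTRICTED P1 already \emph{is} an instance of P1 (it just happens to satisfy the extra condition that the last column of $W_0$ is all 1's), so the identity map is an answer-preserving polynomial-time reduction in that direction, and all the work is in reducing P1 to RESTRICTED P1.

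For that direction I would start from an arbitrary P1 instance $(W_0,H_0)$ and run it through a short chain of transformations, each of which keeps the pair a legal P1 instance --- both factors of rank $k$, product nonnegative --- and preserves the yes/no answer, the last pair in the chain having its first factor's last column equal to $\mathbf 1$. Step 1: delete every zero row of $W_0$. This leaves the rank equal to $k$, and since the deleted rows contribute only zero rows to $W_0Q^{-1}$, a nonsingular $Q$ is feasible before the deletion iff it is feasible afterwards. Now $W_0$ has no zero row, and since $H_0$ has full row rank (hence a right inverse), $A:=W_0H_0$ has no zero row either; so the vector $A\mathbf 1=W_0(H_0\mathbf 1)$ is strictly positive, and, as $\range(W_0)=\range(A)$ (the factors having full rank), this exhibits a strictly positive vector in $\range(W_0)$. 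Step 2: rescale rows. Let $D$ be the diagonal matrix with $D_{ii}=1/(A\mathbf 1)_i$ and replace $(W_0,H_0)$ by $(DW_0,H_0)$; a positive row rescaling changes neither the rank, nor the sign pattern of $W_0Q^{-1}$ for any $Q$, nor the matrix $QH_0$, and now $\mathbf 1=D(A\mathbf 1)$ lies in $\range(DW_0)$.

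Step 3 is a change of basis on the right. Write $W_1=DW_0$, solve the (consistent) system $W_1c'=\mathbf 1$ for some $c'$ --- nonzero, since $\mathbf 1$ is --- and build a nonsingular $M\in\R^{k\times k}$ whose last column is $c'$, for instance by taking its other $k-1$ columns to be all but one of the standard unit vectors, dropping an $\e_j$ with $c'_j\neq 0$. Replace $(W_1,H_0)$ by $(W_1M,\,M^{-1}H_0)$: both factors still have rank $k$, the product is $W_1H_0=DA\ge\bz$, the answer is preserved because $Q$ is feasible for the new pair iff $QM^{-1}$ is feasible for $(W_1,H_0)$ (and $Q\mapsto QM^{-1}$ preserves nonsingularity), and the last column of $W_1M$ equals $W_1c'=\mathbf 1$. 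Thus $(W_1M,\,M^{-1}H_0)$ is a genuine RESTRICTED P1 instance with the same answer as the input. Every step consists of a polynomial number of arithmetic operations on numbers of polynomial bit length --- the only non-routine pieces being one linear solve for $c'$ and one $k\times k$ matrix inversion, both by Gaussian elimination --- so the reduction runs in polynomial time in both the Turing-machine model and the real-number model.

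The one genuinely delicate point, I expect, is the pair of normalizing steps preceding the change of basis: the column space of the first factor is an invariant of the nonnegative-factorization question only after forced-zero rows are removed and the surviving rows are positively rescaled, and at each stage one must verify that the transformed pair is still a \emph{legal} P1 instance --- rank $k$, nonnegative product --- and not merely that the answer is unchanged. Once $\mathbf 1\in\range(W_1)$, the concluding change of basis that slides $\mathbf 1$ into the last column is routine. Since every transformation is invertible, the construction also reduces the search versions to each other: a nonsingular $Q$ solving the constructed RESTRICTED P1 instance gives the solution $QM^{-1}$ of the original P1 instance, and conversely.
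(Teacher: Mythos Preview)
Your proof is correct and follows essentially the same approach as the paper: delete zero rows of $W_0$, use that $W_0H_0\e$ is entrywise positive, then combine a positive diagonal rescaling with a change of basis to force the last column of the first factor to be $\e$. The only cosmetic difference is the order of operations --- the paper first changes basis (taking $\hat Q^{-1}$ to have last column $H_0\e$, which is exactly your $c'$) and then rescales, whereas you rescale first and then change basis; the two routes are interchangeable.
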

\begin{proof}
Given an instance $(W_0,H_0)$ of P1, we can produce an instance of RESTRICTED
P1 as follows.  First, delete all rows of $W_0$ that are identically 0's.
This does not affect the rank of $W_0$, nor does it affect whether
the product $W_0H_0$ is nonnegative.  Finally, if $Q$ is a solution
problem P1 prior to deletion of identically zero rows, 
then it is still a solution
afterwards and vice versa.  

For the next step, let $\hat Q$ be a 
$k\times k$ nonsingular
matrix chosen such that $\hat QH_0\e=\e_k$.  
Here, $\e\in\R^n$ denotes the vector of all 1's,
 and $\e_k\in\R^k$ denotes
the last column of the $k\times k$ identity matrix.
Such a $\hat Q$ is guaranteed
to exist because $H_0\e$ cannot be zero:
$W_0H_0\e$ is the sum of columns of $W_0H_0$, which cannot be zero
since the columns of $W_0H_0$ are all nonnegative and $W_0H_0$ is
not identically zero by the assumption of rank at least 1.
Then observe that $(W_0\hat Q^{-1},\hat QH_0)$ is a yes-instance
of P1 iff $(W_0,H_0)$ is a yes-instance.
Such a $\hat Q$ may be found in polynomial time; for example,
any $k\times k$ nonsingular matrix whose last column is $H_0\e$ may
be taken as $\hat Q^{-1}$, and matrix inversion is polynomial-time
in the Turing machine model \cite{Edmonds}.   

Next, we observe that the last column of $W_0\hat Q^{-1}$ is
$W_0\hat Q^{-1}\e_k=W_0\hat Q^{-1}\hat QH_0\e=W_0H_0\e$.  We already
argued above that this vector is nonzero, but now we will argue
more strongly that every entry of $W_0H_0\e$ is positive.  
First, note that $W_0H_0\e$ 
is the sum of columns of the nonnegative matrix $W_0H_0$, and hence
all its entries are at least nonnegative.  Focus on entry $i$
of $W_0H_0\e$; since it is a sum of nonnegative terms, then if it were
zero then the entire $i$th row of $W_0H_0$ would have to be zeros.  
This means that the $i$th row of $W_0$ is orthogonal to every column
of $H_0$.  But since $H_0$ has full rank, this is possible only if
the $i$th row of $W_0$ is identically 0.  However, this possibility
is ruled out since we deleted identically zero rows of $W_0$.

Thus, the last column of $W_0\hat Q^{-1}$ contains all positive entries.
Therefore, we can consider the instance of P1 given by $(DW_0\hat Q^{-1},
\hat Q H_0)$ where $D$ is an $m\times m$ positive definite diagonal matrix
with diagonal entries chosen to make the last column of $DW_0\hat Q^{-1}$
equal to 1.  This instance of P1 is a yes-instance only if the original
instance was a yes-instance, because multiplying the first factor by
a positive definite diagonal matrix does not affect the signs of
$W_0H_0$ nor of $W_0\hat Q^{-1}Q^{-1}$.

The opposite reduction, namely the one from from RESTRICTED P1 to P1,
is trivial since any instance of RESTRICTED P1 is also an instance of P1.
\end{proof}

Now finally we get to the main new problem of this section.

\vspace{0.1in}
\underline{\bf INTERMEDIATE SIMPLEX:}  We are
given a polyhedron $P=\{\x\in\R^{k-1}: A\x\ge\b\}$
where $A\in \R^{n\times(k-1)}$ and $\b\in\R^n$
such
that $[A,\b]$ has rank $k$.  We are also given a set $S\subset \R^{k-1}$ of
$m$ points that are all contained in $P$ and that are not all
contained in any hyperplane (i.e., they
affinely span $\R^{k-1}$).  The question is whether
there exists a $(k-1)$-simplex $T$ such
that $S\subset T\subset P$.

\begin{theorem}
There is a polynomial-time reduction from RESTRICTED P1 to
INTERMEDIATE SIMPLEX and vice versa.
\end{theorem}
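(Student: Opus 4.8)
The plan is to build a geometric dictionary between the two problems and read both reductions off it. Given a RESTRICTED P1 instance $(W_0,H_0)$, I interpret the $m$ rows of $W_0$ as points of $\R^{k-1}$: since the last column of $W_0$ is all $1$'s, row $i$ is $(\p_i,1)$ with $\p_i\in\R^{k-1}$, and I set $S=\{\p_1,\dots,\p_m\}$, which affinely spans $\R^{k-1}$ because $\rank(W_0)=k$. I interpret the $n$ columns of $H_0$ as half-spaces: writing the $\ell$-th column as $\h_\ell=(\f_\ell,-b_\ell)$, it names the half-space $\{\x:\f_\ell^T\x\ge b_\ell\}$, and I let $P$ be the intersection of these $n$ half-spaces; writing $A$ for the matrix with rows $\f_\ell^T$ and $\b$ for the vector with entries $b_\ell$, the matrix $[A,\b]$ has rank $k$ precisely because $H_0$ does, and $S\subseteq P$ precisely because $W_0H_0\ge\bz$. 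Finally a nonsingular $Q\in\R^{k\times k}$ is interpreted as a candidate simplex $T$: the columns of $Q^{-1}$, written $\r_j=(\u_j,s_j)$, name $k$ half-spaces with intersection $T=\{\x:\u_j^T\x+s_j\ge0,\ j=1,\dots,k\}$, and the rows $(\y_i,t_i)$ of $Q$, scaled by $1/t_i$ when $t_i>0$, are the vertices $\v_i$ of $T$; the identity $QQ^{-1}=I$ is exactly the incidence relation $\u_j^T\v_i+s_j=\delta_{ij}/t_i$, which says vertex $i$ lies on facet $j$ iff $i\ne j$.

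With this dictionary the two nonnegativity constraints translate directly. The inequality $W_0Q^{-1}\ge\bz$ says every $\p_i$ satisfies every one of the $k$ facet inequalities of $T$, i.e.\ $S\subseteq T$. The inequality $QH_0\ge\bz$ says every row $\g_i=(\y_i,t_i)$ of $Q$ has $\g_i^T\h_\ell\ge0$; and since any $(\x,1)$ with $\x\in T$ equals the nonnegative combination $\sum_j\mu_j\g_j$ whose coefficients $\mu_j=\u_j^T\x+s_j$ are precisely the slacks of the facet inequalities, this forces $(\x,1)^T\h_\ell=\sum_j\mu_j\g_j^T\h_\ell\ge0$ for every $\x\in T$ and every $\ell$, i.e.\ $T\subseteq P$. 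Conversely, from a $(k-1)$-simplex $T$ with vertices $\v_1,\dots,\v_k$ satisfying $S\subseteq T\subseteq P$ I recover $Q$ with $i$-th row $(\v_i,1)$: it is nonsingular because the $\v_i$ are affinely independent, $W_0Q^{-1}\ge\bz$ because the rows of $W_0Q^{-1}$ are the (nonnegative, since $S\subseteq T$) barycentric coordinates of the $\p_i$ with respect to $T$, and $QH_0\ge\bz$ because each vertex of $T$ lies in $P$. Thus, modulo the subtlety below, a witness $Q$ and an intermediate simplex $T$ are interchangeable, and the reduction from INTERMEDIATE SIMPLEX is this same correspondence read backwards, with the routine verification that the $(W_0,H_0)$ built from $(S,P)$ meets all three hypotheses of RESTRICTED P1.

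The one genuine subtlety, and the step I expect to cost the most, is that the dictionary is honest only when a witness $Q$ has all $t_i>0$; if some $t_i\le0$ the matching ``vertex'' of $T$ escapes to infinity and $T$ is an unbounded polyhedron rather than a $(k-1)$-simplex. (The last column of $W_0$ being all $1$'s forces $W_0Q^{-1}\q=\e$ for $\q=(t_1,\dots,t_k)^T$ the last column of $Q$, but this does not force $\q$ positive.) I would remove this obstacle by a normalization applied before translating, exactly in the spirit of the previous theorem: replace $(W_0,H_0)$ by $(DW_0\hat Q^{-1},\hat QH_0)$, where $\hat Q$ is any nonsingular matrix with $\hat QH_0\e=\e_k$ (which exists because $H_0\e\ne\bz$) and $D$ is the positive diagonal matrix restoring the last column of $W_0$ to all $1$'s (legitimate because $W_0H_0\e$ is entrywise positive, $W_0H_0$ having no zero row as in the previous proof). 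This preserves yes/no-ness by arguments already available, and afterwards the columns of $H_0$ sum to $\e_k$; consequently $P$ has trivial recession cone, since if $\f_\ell^T\w\ge0$ for all $\ell$ then $\sum_\ell\f_\ell^T\w=(\w,0)^T(\sum_\ell\h_\ell)=(\w,0)^T\e_k=0$ forces each $\f_\ell^T\w=0$, so $(\w,0)$ is orthogonal to the columns of $H_0$ and hence zero. So $P$ is a polytope, and once $P$ is bounded the cone $\{\z:\h_\ell^T\z\ge0,\ \ell=1,\dots,n\}$ is pointed and meets the hyperplane $\{z_k=0\}$ only at the origin, whence every nonzero vector in it has positive last coordinate; in particular the rows of any witness $Q$ do, so $T$ is a genuine $(k-1)$-simplex and the equivalence closes. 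For the reverse reduction, where $P$ is given, one may first intersect $P$ with a cube of polynomially bounded bit-length large enough not to change the answer (a feasible intermediate simplex, being bounded, may be taken with a polynomially bounded description), putting us back in the bounded-$P$ case. Every operation used---Gaussian elimination for $\hat Q$, matrix inversion, matrix multiplication, diagonal rescaling---costs a number of arithmetic operations polynomial in $m$ and $n$, so the reductions are polynomial-time in both the Turing-machine model and the real-number model of Blum et al.
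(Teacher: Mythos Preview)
Your proof uses the same bijection as the paper: rows of $W_0$ become the points of $S$, columns of $H_0$ become the defining half-spaces of $P$, and a witness $Q$ corresponds to a simplex $T$ whose vertices are the rows of $Q$ rescaled to have last coordinate $1$. Where you differ is in rigor: the paper simply says ``let $\g_1,\ldots,\g_k$ be the columns of $Q^T$'' and asserts that the argument runs the same way in reverse, never addressing why the rows of an arbitrary witness $Q$ should have positive last coordinate. You correctly flag this as a genuine issue (and it is one: for instance with $k=2$, $W_0=\left(\begin{smallmatrix}0&1\\1&1\end{smallmatrix}\right)$, $H_0=\left(\begin{smallmatrix}1&1\\0&1\end{smallmatrix}\right)$, the matrix $Q=\left(\begin{smallmatrix}1&-1\\0&1\end{smallmatrix}\right)$ is a valid witness with a negative entry in its last column). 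Your normalization for the RESTRICTED~P1 $\to$ INTERMEDIATE~SIMPLEX direction, forcing $H_0\e=\e_k$ so that $P$ becomes a polytope and hence every witness $Q$ automatically has positive last column, is clean and correct, and strictly improves on the paper's argument.

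The one place your plan is thinner than it should be is the reverse reduction. Intersecting $P$ with a large cube is fine \emph{provided} you can bound the cube polynomially, and your justification---``a feasible intermediate simplex, being bounded, may be taken with a polynomially bounded description''---is asserted rather than proved. The constraints $S\subset T\subset P$ are quadratic in the unknowns (vertex coordinates times barycentric weights), so generic real-algebraic bounds do not immediately give polynomial bit-size; you would need a problem-specific argument, e.g.\ via the local-search linearization of Theorem~5, or by directly showing that any full-dimensional polyhedron in $\R^{k-1}$ with at most $k$ facets containing $\hull(S)$ also contains a simplex containing $\hull(S)$ (which would let you bypass the cube entirely and argue that the existence of \emph{some} witness $Q$ already forces the existence of a bounded one). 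Either route closes the gap, but as written this step needs more than a parenthetical.
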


\begin{proof}
We will prove that both reductions exist 
at the same time by exhibiting a bijection
between instances of RESTRICTED P1 and instances of
INTERMEDIATE SIMPLEX such that both directions of the
bijection can be computed in polynomial time.

Given an instance $(W_0,H_0)$ of RESTRICTED P1, we produce an instance
of INTERMEDIATE SIMPLEX as follows.  The polytope
$P\subset \R^{k-1}$ is given by $\{\x\in\R^{k-1}:H_0(1:k-1,:)^T\x\ge -H_0(k,:)^T\}$. 
(This constraint may be written more compactly as $H_0^T[\x;1]\ge \bz$.)
The set $S$ of $m$ points in $P$ is given by 
$S=\{W_0(1,1:k-1)^T,\ldots, W_0(m,1:k-1)^T\}$.
The inverse mapping of this transformation starts with an instance
of INTERMEDIATE SIMPLEX given by 
$P=\{\x:A\x\ge \b\}$, $A\in\R^{m\times (k-1)}$
and $S=\{\x_1,\ldots,\x_m\}$ and produces
an instance of RESTRICTED P1 given 
by 
$$W_0=\left(
\begin{array}{cc}
\x_1^T  & 1 \\
\vdots & \vdots \\
\x_m^T & 1
\end{array}
\right)$$
and $H_0=[A^T;-\b^T]$.  

We first show that all side-constraints present in the statement
of RESTRICTED P1 and INTERMEDIATE SIMPLEX are satisfied.
The side-constraint that $[A,\b]$ has rank $k$ is equivalent (under this
bijection) to the side-constraint that $H_0$ has rank $k$.  
The side-constraint that $\x_1,\ldots,\x_m$ affinely span $\R^{k-1}$
is equivalent to requiring that $[\x_1;1],\ldots,[\x_m;1]$ 
linearly span $\R^k$, i.e., to the side-constraint that $W_0$ has
rank $k$.  Finally, the side constraint that $S\subset P$
means that $A\x_i\ge\b$ for $i=1,\ldots, m$, i.e., $[A,-\b][\x_i;1]\ge 0$,
which is hence equivalent to the side-constraint that all entries of
$W_0H_0$ are nonnegative.

We now show that the above bijection
in both directions  maps yes-instances to yes-instances.
Let $(S,P)$ be an instance of INTERMEDIATE SIMPLEX and $(W_0,H_0)$
the corresponding instance of RESTRICTED P1. 
Let $T$ be a putative solution to the instance of INTERMEDIATE
SIMPLEX.  Let its
vertices be $\g_1,\ldots,\g_k$, which are vectors in $\R^{k-1}$.
The condition that $T\subset P$ is equivalent
to requiring 
$\g_1,\ldots,\g_k\in P$, i.e., to
$H_0^T[\g_i;1]\ge \bz$ for each $i=1,\ldots, k$.  
If we let 
\begin{equation}
G=\left(
\begin{array}{ccc}
\g_1 & \cdots & \g_k \\
1 & \cdots & 1
\end{array}
\right),
\label{eq:gdef}
\end{equation}
then we have shown that the condition $T\subset P$ is equivalent to
requiring $H_0^TG$ has all nonnegative entries.

The condition that $S\subset T$ means that for all $i=1,\ldots,m$,
$\x_i\in T$.  Recall that, by definition,
 a vector is inside a simplex if
it is a convex combination of its vertices.
Let $\q_i$ be the putative vector of coefficients of the convex combination
that expresses $\x_i$ in the hull of the vertices of $T$, for
$i=1,\ldots,m$.  In other words, 
\begin{equation}
[\g_1,\ldots,\g_k]\q_i=\x_i,
\label{eq:convhull}
\end{equation}
plus the requirements  that the entries of $\q_i$ are nonnegative and sum to 1.
The latter constraint may be combined with \eref{eq:convhull} to write
$G\q_i=[\x_i;1]$ where $G$ is as in \eref{eq:gdef}, i.e., 
$\q_i=G^{-1}W_0(i,:)^T$.  The hypothesis that $S\subset T$ is thus
equivalent to
the condition that each entry of $G^{-1}W_0^T$
for each $i=1,\ldots,m$ is nonnegative, i.e., 
all entries of
$G^{-1}W_0^T$ must be nonnegative.   Hence, we have shown that 
$T$ is a solution to the instance $(S,P)$ if and only if
$G^T$ is a solution to the instance $(W_0,H_0)$ of RESTRICTED P1.

The argument is essentially the same in the other direction.
Given an instance $(W_0,H_0)$ of RESTRICTED P1, let 
$(S,P)$ be the corresponding instance of INTERMEDIATE SIMPLEX.
Let $Q$ be a putative solution to the RESTRICTED P1 instance,
and let $\g_1,\ldots,\g_k$ be the columns of $Q^T$.  Using the
arguments in the previous paragraph shows that $W_0Q^{-1}$
and $QH_0$ have nonnegative entries iff $S\subset T$ and $T\subset P$.
\end{proof}

An easy consequence of our transformation of EXACT NMF to INTERMEDIATE
SIMPLEX is the observation that when $\rank(A)=2$, the NMF instance
is always a yes-instance.  The reason is that the resulting instance
of INTERMEDIATE SIMPLEX is 1-dimensional in which case $P$ is an
interval.  However, if $P$ is an interval then it is already a simplex,
so one could take $T=P$ to solve the instance.  This observation
yields a simple linear-time algorithm to find an exact nonnegative
factorization of $A$  case $\rank(A)=2$.
case.
This result was first established by Cohen and Rothblum \cite{CohenRothblum},
who also propose a simple linear-time algorithm.

\section{INTERMEDIATE SIMPLEX is NP-hard}
\label{sec:nphard}
In this section, we will argue that the problem INTERMEDIATE
SIMPLEX introduced in the previous section
is NP-hard.  

Before delving into the statement of the main theorem and its 
proof, we first state the following simpler lemma and 
proof.  This lemma describes the `gadget' used in the main theorem
below to encode a setting of a boolean variable.
\begin{lemma}
Consider the following instance of INTERMEDIATE SIMPLEX: the
polyhedron $P$ is given by $P=\{(x,y)\in\R^2: 0\le x,y\le 1\}$, while
the set $S$ is given by $\{(0,1/2),(1,1/2),(1/2,1/4),(1/2,3/4)\}$.
This instance has precisely two solutions $T_0$ or $T_1$ defined by
$T_0=\hull\{(0,0),(0,1),(1,1/2)\}$ and
$T_1=\hull\{(1,0),(1,1),(0,1/2)\}$.
\label{lem:intsimp}
\end{lemma}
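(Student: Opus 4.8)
The plan is to prove the two inclusions separately: first that $T_0$ and $T_1$ are indeed solutions, and then that every solution coincides with one of them. The first part is a direct verification. Since $P$ is convex and the three vertices $(0,0),(0,1),(1,1/2)$ of $T_0$ all lie in $P$, we have $T_0\subseteq P$, and likewise $T_1\subseteq P$; and $S\subseteq T_0$ follows by exhibiting each of the four points of $S$ as a convex combination of the vertices of $T_0$ (in fact each point of $S$ lies on an edge of $T_0$), a routine computation. For $T_1$ one may repeat the computation, or simply observe that the reflection $x\mapsto 1-x$ maps $P$ onto itself, maps $S$ onto itself, and carries $T_0$ to $T_1$; this reflection is reused below.

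For the converse, let $T$ be any $(k-1)$-simplex (here $k-1=2$, so a nondegenerate triangle) with $S\subseteq T\subseteq P$. Two observations drive the argument. First, $T$ contains the segment from $(0,1/2)$ to $(1,1/2)$, since it contains both endpoints and is convex. Second, because $T\subseteq P$ while $(0,1/2)$ and $(1,1/2)$ lie on the sides $x=0$ and $x=1$ of the unit square, the lines $x=0$ and $x=1$ are supporting lines of $T$ through those points; hence each of $(0,1/2),(1,1/2)$ is either a vertex of $T$ or lies in the relative interior of an edge of $T$ contained in the corresponding vertical side of the square. Of the four combinations, two are impossible: $T$ cannot have one edge along $x=0$ and another along $x=1$, since two edges of a triangle cannot be parallel; and $(0,1/2)$ and $(1,1/2)$ cannot both be vertices of $T$, because then the edge joining them lies on the line $y=1/2$ and $T$ lies entirely in one of the closed half-planes $y\le 1/2$, $y\ge 1/2$, contradicting $(1/2,1/4)\in T$ and $(1/2,3/4)\in T$. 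Thus, after possibly applying the reflection $x\mapsto 1-x$, we may assume $(1,1/2)$ is a vertex of $T$ and $(0,1/2)$ lies in the relative interior of an edge of $T$ lying along $x=0$.

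In this case $T=\hull\{(0,a),(0,b),(1,1/2)\}$ for some $0\le a<1/2<b\le 1$: the third vertex is $(1,1/2)$, the only vertex off the line $x=0$, and the bounds on $a$ and $b$ come from $T\subseteq P$ together with $(0,1/2)$ lying strictly between $(0,a)$ and $(0,b)$. Such a $T$ automatically lies in $P$, so the only constraints left to impose are $(1/2,1/4)\in T$ and $(1/2,3/4)\in T$. Writing each of these points in barycentric coordinates $\lambda_1(0,a)+\lambda_2(0,b)+\lambda_3(1,1/2)$, the first coordinate forces $\lambda_3=1/2$ in both cases; the second coordinate then reads $\lambda_1 a+\lambda_2 b=0$ for $(1/2,1/4)$, which forces $a=0$ since $a\ge 0$, $b>0$ and $\lambda_1,\lambda_2\ge 0$, and reads $\lambda_1 a+\lambda_2 b=1/2$ with $\lambda_1+\lambda_2=1/2$ for $(1/2,3/4)$, which forces $b\ge 1$ and hence $b=1$. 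Therefore $T=\hull\{(0,0),(0,1),(1,1/2)\}=T_0$, and the reflection yields $T=T_1$ in the remaining case, completing the proof. I expect the delicate step to be the case analysis on how $(0,1/2)$ and $(1,1/2)$ sit inside $T$ — formalizing the supporting-line dichotomy and eliminating the two impossible combinations — rather than the short barycentric computations at the end.
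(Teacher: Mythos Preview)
Your proof is correct and follows essentially the same approach as the paper: both argue that $(0,1/2)$ and $(1,1/2)$, lying on opposite sides of $P$, must each be either a vertex of $T$ or lie on an edge of $T$ contained in that side, then rule out the two symmetric impossible cases (two parallel edges; two vertices forcing $T$ to one side of $y=1/2$), and finally force the edge on $x=0$ (or $x=1$) to be the full segment so as to cover $(1/2,1/4)$ and $(1/2,3/4)$. Your version adds the explicit reflection $x\mapsto 1-x$ to reduce to one case and spells out the final step via barycentric coordinates, whereas the paper leaves that last verification to the reader.
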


\begin{figure}
\begin{center}
\epsfig{file=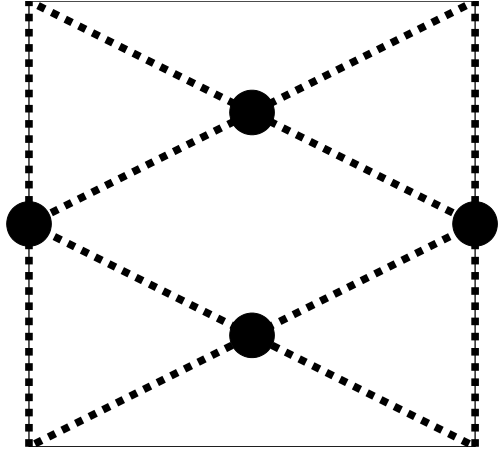,height=6cm}
\caption{Illustration of \protect\lemref{lem:intsimp}.  The
four large dots are the points in $S$; the thin solid line is
the boundary of $P$, and the two triangles indicated with thick
dashed lines are the two possible solutions $T_0$ and $T_1$.}
\label{fig:lemma}
\end{center}
\end{figure}

A diagram of the lemma is given in \figref{fig:lemma}.
It is easy to check that the side-constraints 
of INTERMEDIATE SIMPLEX (that $S\subset P$, that
$[A,\b]$ has full column rank, that $S$ affinely spans $\R^2$)
are satisfied by the above
instance.

\begin{proof}
The fact that $S\subset T_i\subset P$, $i=0,1$, is elementary to
check. The fact that there are no other solutions is proved as follows.
Suppose $T$ is a solution.
Let $E_0$ and $E_1$ be the two parallel edges of $P$ given by
$E_0=\{0\}\times [0,1]$ and $E_1=\{1\}\times[0,1]$.
Observe that the point $(0,1/2)\in S$ lies on $E_0$,
which means that the face of $T$ containing $(0,1/2)$ must be
either 0 or 1-dimensional, and if it is 1-dimensional then it must
be a subset of $E_0$.  Similarly, $(1,1/2)$ must lie on a 0- or 1-dimensional
boundary of $T$.  It is not possible for both $(0,1/2)$ and $(1,1/2)$ to
lie on 1-dimensional boundaries since a triangle cannot have two parallel
edges.  It is also not possible for both $(0,1/2)$ and $(1,1/2)$ to be
0-dimensional boundaries because in this case $[0,1]\times\{1/2\}$ would
be a bounding segment of $T$.  Then all of $T$ would have to
be either above or below the segment, but then $T$ would fail to cover
either $(1/2,1/4)$ or $(1/2,3/4)$, points in $S$.  Thus, the only
possibilities are (1) that $(0,1/2)$ is a vertex of $T$, and $T$ has
an edge that is a subset of $E_1$, or (2) that $(1,1/2)$ is a vertex of
$T$, and $T$ has an edge that is a subset of $E_0$.  But now one checks that
in either case, in order to cover the two points $(1/2,3/4)$ and $(1/2,1/4)$,
the entire edge $E_0$ or $E_1$ must be taken as an edge of $T$.
\end{proof}

We now turn to the main result for this section, namely, the
NP-hardness of INTERMEDIATE SIMPLEX.
In particular, we reduce 3-SAT \cite{GJ} to this problem.  Our reduction
uses integers whose magnitude is polynomial in the instance of the
3-SAT instance, and hence our result is `strong' NP-hardness.
Recall that an instance of 3-SAT involves $p$ boolean variables
denoted $x_1,\ldots, x_p$ and $q$ clauses denoted $c_1,\ldots,c_q$.
Each clause is a disjunction of three literals, where a literal
is either a variable $x_j$ or its complement $\tilde x_j$.  
An instance of 3-SAT is a yes-instance if and only if there exists
a setting of the variables, that is, an assignment of a value of
either 0 or 1 to each variable, such that each clause is
satisfied, i.e., at least one of its three literals is 1.
It is assumed that the same variable does not occur twice 
(either in complemented or plain form) in any particular clause.

Given such an instance of 3-SAT, we define the following instance
of INTERMEDIATE SIMPLEX.  It contains $3p+q$ variables
(i.e., $k-1=3p+q$) denoted $s_i,t_i,u_i$, $i=1,\ldots, p$, and
$v_j$, $j=1,\ldots, q$.  These variables are written
as $(\s,\t,\u,\v)$ for short.
The polyhedron $P$ is defined by
the following inequalities:
\begin{equation}
P=\left\{
\begin{array}{lll}
(\s,\t,\u,\v): & 
\bz \le\s\le\u, \\
& \bz\le\t\le \u, \\
& \bz\le\u\le \e, \\
& \v\ge \bz, \\
& s_i-2t_i \le v_j &\mbox{whenever $\tilde x_i\in c_j$}, \\
& 2t_i-2s_i-u_i \le v_j &\mbox{whenever $ x_i\in c_j$}
\end{array}
\right\}.
\label{eq:Pdef}
\end{equation}
Here, $\e$ denotes the vector of all 1's.  In the above
usage, $\e\in\R^p$, but we shall also use $\e$ to denote
the vector of all 1's in $\R^q$.

Let $\e_i$ denote the $i$th column of the identity matrix (either
the $p\times p$ or $q\times q$ identity).
The set of points $S$ is defined as
follows.  Each of the points in the following
equation is also given a name for future reference.
\begin{equation}
S=\left\{
\begin{array}{lll}
\bz, \\
(\e/(4p), \e/(4p),\e/(2p), 2.5\e/(8p)) &(\equiv \b), \\
(\bz,\bz,\bz,\e_j) & (\equiv \h_j), & j=1,\ldots, q \\
(\bz, \e_i/4, \e_i/2,\e) &(\equiv \r_i^1), & i=1,\ldots, p \\
(\e_i/2, \e_i/4, \e_i/2,\e)  &(\equiv \r_i^2), & i=1,\ldots, p\\
(\e_i/4, \e_i/8, \e_i/2,\e)&(\equiv \r_i^3), & i=1,\ldots, p,  \\
(\e_i/4, 3\e_i/8, \e_i/2,\e) &(\equiv \r_i^4), & i=1,\ldots, p 
\end{array}
\right\}.
\label{eq:Sdef}
\end{equation}

Let us first confirm that the side-constraints of INTERMEDIATE
SIMPLEX are satisfied by this instance.  Since $\bz\in S$, 
$S$ affinely spans $\R^{3p+q}$ iff it linearly spans $\R^{3p+q}$.
Points $\h_j$, $j=1,\ldots,q$,
span the subspace defined by
the last $q$ coordinate entries.
Fix some $i\in\{1,\ldots,p\}$.
Subtract
$\h_1+\ldots+\h_q$ from the three points
$\r_i^1,\r_i^2,\r_i^3$.
This yields three points whose nonzero entries are restricted to the
$(s_i,t_i,u_i)$ positions; in these positions the three points have
coordinate entries $(0,1/4,1/2)$, $(1/2,1/4,1/2)$ and $(1/4,1/8,1/2)$,
which are linearly independent.  Thus, the subspace
indexed by $(s_i,t_i,u_i)$ is spanned by $S$.  This is true for all $i$,
so therefore the points in $S$ span all of $\R^{3p+q}$.

The next side-constraint is that the linear inequalities defining
$P$ are independent.  One checks that the constraints $\s\ge \bz$,
$\t\ge \bz$, $\u\ge\bz$, $\v\ge \bz$ imply that the constraint
matrix contains
a $(3p+q)\times (3p+q)$ identity matrix and hence has independent columns.
We can also check that the right-hand side is independent of the
columns of the matrix; if it were dependent, then there would be
a point such that all the constraints are active at that point, which
is obviously impossible (e.g., the constraints $\u\ge\bz$ and
$\u\le \e$ cannot be simultaneously active).
The final side-constraint is that $S\subset P$, which is an
elementary matter to check.

The main theorem of this section is as follows.
\begin{theorem}
The instance of 3-SAT is a yes-instance if and only if the above
instance of INTERMEDIATE SIMPLEX is a yes-instance.  In other
words, the 3-SAT instance has a satisfying assignment if and only
if there exists a simplex $T$ such that $S\subset T\subset P$.
\end{theorem}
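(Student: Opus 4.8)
The plan is to prove the two implications separately, using the gadget of \lemref{lem:intsimp} to encode each boolean variable and the last family of inequalities in \eref{eq:Pdef} to encode each clause. The first thing I would record is a structural fact that any feasible simplex $T$ must satisfy, since it is used in both directions. Because every coordinate of every point of $P$ is nonnegative, the all-zero point $\bz\in S$ can be written as a convex combination of vertices of $T$ only if $\bz$ is itself a vertex; likewise, for each $j$, covering $\h_j=(\bz,\bz,\bz,\e_j)$ forces every contributing vertex to have $\s=\t=\u=\bz$ and $\v$ supported only on coordinate $j$, so there must be, for each $j$, at least one ``clause vertex'' of the form $(\bz,\bz,\bz,c_j\e_j)$ with $c_j\ge 1$. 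Since $T$ has only $k=3p+q+1$ vertices, this leaves at most $3p$ further vertices, which must be distributed as three per variable; restricted to the coordinates $(s_i,t_i,u_i)$, the $i$th triple together with $\r_i^1,\ldots,\r_i^4$ is a rescaled, lifted copy of the two-solution gadget of \lemref{lem:intsimp}, so I would show it admits exactly two configurations, read as $x_i=0$ and $x_i=1$.

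For the ($\Rightarrow$) direction I would, given a satisfying assignment, write down $T$ explicitly: the vertices are $\bz$; the $q$ clause vertices $(\bz,\bz,\bz,c\e_j)$ with a suitable common $c$; and, for each $i$, three variable vertices whose $(s_i,t_i,u_i)$ part is the ``$T_0$ triple'' if $x_i$ is false and the ``$T_1$ triple'' if $x_i$ is true, with all remaining coordinates (in particular the $v_j$'s of clauses mentioning $x_i$) taken as small as membership permits. Then I would check both containments. For $S\subseteq T$: $\bz$ is a vertex; each $\r_i^\ell$ is a convex combination of $\bz$, the clause vertices, and the three $i$th variable vertices, mirroring the convex-combination identities behind \lemref{lem:intsimp}; each $\h_j$ uses only $\bz$ and its clause vertex; and $\b$ is a barycenter-like combination of all vertices, which is where the constant $2.5/(8p)$ in the definition of $\b$ is calibrated. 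For $T\subseteq P$: the box constraints $\bz\le\s\le\u\le\e$, $\bz\le\t\le\u$, $\v\ge\bz$ are immediate at every vertex, and the crux is the clause inequalities $s_i-2t_i\le v_j$ (for $\tilde x_i\in c_j$) and $2t_i-2s_i-u_i\le v_j$ (for $x_i\in c_j$): at a clause vertex they reduce to $0\le c$, and at a variable vertex the left-hand side is $\le 0$ exactly when the corresponding literal is true, so one may take every $v_j$-coordinate of every variable vertex equal to $0$ --- consistent precisely because the assignment satisfies $c_j$, hence at least one of its three literals is true.

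For the ($\Leftarrow$) direction I would start from a feasible $T$, invoke the structural fact to write $T$ as the hull of $\bz$, the $q$ clause vertices, and $p$ triples of variable vertices, and use the gadget rigidity to extract $a(x_i)\in\{0,1\}$ from the $i$th triple. It then remains to show $a$ satisfies every clause. Fix $j$; its three literals involve three distinct variables, and for each such variable $i$ the corresponding inequality of \eref{eq:Pdef} must hold at every vertex of $T$, in particular at the three variable vertices of group $i$. I would show that if the literal of $c_j$ in variable $i$ is false under $a$, then at one of those three vertices the left-hand side of that inequality is strictly positive and bounded below by a fixed constant, forcing a correspondingly positive $v_j$-coordinate there; and that if all three literals of $c_j$ were false, the resulting three positive $v_j$-contributions, combined with the requirement that $\r_i^\ell$ (which has $v_j=1$) and $\b$ lie in $T$ and with the budget constraints on the convex-combination coefficients, are mutually inconsistent. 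Hence some literal of $c_j$ is true, and $a$ satisfies the instance.

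The main obstacle is the gadget-rigidity step in the reverse direction. \lemref{lem:intsimp} is stated for a genuine $2$-simplex, whereas here the relevant object is the slice of the $(3p+q)$-simplex $T$ by the two-dimensional flat on which $\r_i^1,\ldots,\r_i^4$ lie, and that slice could a priori be a quadrilateral or larger polygon rather than a triangle; the two-solution conclusion therefore cannot simply be quoted, and one must combine the lemma-style local reasoning with the global accounting --- only three vertices are available to group $i$, and those same vertices must also help cover $\b$ and respect the clause inequalities --- to eliminate the ``fat'' configurations. Once the gadget is pinned to exactly its two states, the remaining clause bookkeeping is a finite, if fiddly, calculation.
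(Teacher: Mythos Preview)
Your overall architecture matches the paper's: the structural decomposition of the vertices of $T$ into $\bz$, $q$ ``clause vertices'' $\lambda_j\h_j$, and $p$ triples of ``variable vertices''; the use of \lemref{lem:intsimp} (after rescaling via the $u_i$-coordinate) to pin each triple to one of two configurations; and, in the $(\Leftarrow)$ direction, the budget argument on the $v_j$-coordinate of $\b$ to bound the number of falsified literals per clause. Your identification of the gadget-rigidity step as the delicate point is also accurate, and the paper handles it essentially as you outline, by combining the counting (exactly three $x_i$-positive vertices remain) with the rescaled lemma.

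There is, however, a real error in your $(\Rightarrow)$ direction. You write that at a variable vertex the left-hand side of the relevant clause inequality is $\le 0$ exactly when the corresponding literal is true, and conclude that one may take every $v_j$-coordinate of every variable vertex equal to $0$ ``because the assignment satisfies $c_j$, hence at least one of its three literals is true.'' But satisfaction of $c_j$ only guarantees that \emph{one} of its three literals is true; the other (up to two) literals may be false, and for each such false literal the corresponding variable triple has a vertex at which the left-hand side is strictly positive, forcing a strictly positive $v_j$ there. So the construction with all $v_j=0$ at variable vertices does not lie in $P$ in general.

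The paper's construction instead sets the offending $v_j$-entries to a fixed positive value (it takes $\mu_{i,k}=5/8$ and puts $\g_{i,k}|_{v_j}=5/8$ exactly at the vertex where the false literal forces it), and then one must re-verify that $\b$ is still covered. This is precisely where satisfiability is used in the forward direction: with at most two false literals per clause, the forced $v_j$-contributions in the expression of $\b$ total at most $2/(8p)<2.5/(8p)=\b|_{v_j}$, and the slack is absorbed by the clause vertices $\lambda_j\h_j$. In other words, the $2.5/(8p)$ calibration is doing work in \emph{both} directions, not only in $(\Leftarrow)$; your $(\Rightarrow)$ sketch bypasses it and therefore does not close.
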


\begin{proof}
First, let us choose some terminology for the coordinates of $\R^{3p+q}$.
The individual coordinates may be denoted by $s_i$, $t_i$, $u_i$ or
$v_j$ for $i=1,\ldots,p$ and $j=1,\ldots,q$.  Collectively, the
three coordinates $(s_i,t_i,u_i)$ are called the ``$x_i$ coordinates''
since they correspond to the $i$th boolean variable in the 3-SAT
instance.

Let $T$ be a solution to the instance of INTERMEDIATE SIMPLEX.
From $T$ we will construct a satisfying assignment $\sigma$ for the
3-SAT instance.
Clearly $T$ has exactly $3p+q+1$ vertices.
Observe first that the point $\bz$ is an extreme point of $P$ and also
lies in $S$, and therefore one vertex of $T$ must be $\bz$. 

Similarly, observe that each $\h_j$, $j=1,\ldots, q$,
lies on extreme edge of $P$, and therefore $T$ must have $q$ vertices
of the form $\lambda_j\h_j$, $j=1,\ldots, q$
with each  $\lambda_j\ge 1$.  

This accounts for all but $3p$ of the vertices of $T$.  
For an $i\in\{1,\ldots,p\}$, let us say that a vector 
$(\s,\t,\u,\v)\in\R^{3p+q}$ is {\em $x_i$-supported} if
it is zero in all the $x_j$-coordinates for all $j\in\{1,\ldots,p\}-\{i\}$.  
More
strongly, say that it is {\em $x_i$-positive} if it 
is $x_i$-supported and is positive in at least one of the 
$x_i$ coordinates.
Fix a particular $i\in\{1,\ldots,p\}$ and consider the four $S$-points
$\r_i^1,\ldots,\r_i^4$ which are all $x_i$-positive.
Projected into the $x_i$ coordinates, these points 
are $(0,1/4,1/2)$, $(1/2,1/4,1/2)$, $(1/4,1/8,1/2)$ and 
$(1/4,3/8,1/2)$.
Since none of the $T$-vertices has negative entries, each of 
$\r_i^1,\ldots,\r_i^4$
must lie in the hull only of $T$-vertices that are
$x_i$-supported such as
$\bz,\lambda_1\h_1,\ldots,\lambda_q\h_q$.  Furthermore, it must lie in the hull
of at least one $x_i$-positive vertex of $T$.
In fact, there must be at least three such $x_i$-positive $T$-vertices
since the four points, when projected into the $x_i$ coordinates,
are linearly independent.  Thus, $T$ must have at least three
$x_i$-positive vertices for each $i=1,\ldots,p$.  
Since there are only $3p$ vertices of $T$ not
yet enumerated, we conclude that $T$ must have exactly three 
$x_i$-positive vertices for each $i$, which we denote
$\g_{i,1},\g_{i,2},\g_{i,3}$.

Let $\bar\g_{i,1},\bar\g_{i,2},\bar\g_{i,3}\in\R^3$
denote the 
$x_i$ coordinates of $\g_{i,1},\g_{i,2},\g_{i,3}$.
By the assumption that $T$ covers the four points
$(0,1/4,1/2)$, $(1/2,1/4,1/2)$, $(1/4,1/8,1/2)$ and $(1/4,3/8,1/2)$
in the projection into the $x_i$ coordinates, we 
conclude that there must exist a $3\times 4$ matrix $B$ with
nonnegative entries such that
$$\left(\bar\g_{i,1},\bar\g_{i,2},\bar\g_{i,3}\right)\cdot B
=\left(
\begin{array}{cccc}
0 & 1/2 & 1/4 & 1/4 \\
1/4 & 1/4 & 1/8 & 3/8 \\
1/2 & 1/2 & 1/2 & 1/2
\end{array}
\right).$$
As mentioned above, all of 
$\bar\g_{i,1},\bar\g_{i,2},\bar\g_{i,3}$ are nonzero.
Because of the inequalities $\bz\le\s\le\u$ and $\bz\le\t\le\u$ that define
$P$, it must be the case that the third entries of
$\bar\g_{i,1},\bar\g_{i,2},\bar\g_{i,3}$ are all positive and no smaller
than the first and second entries.  Therefore, define new
vectors $\hat \g_{i,1},\hat \g_{i,2},\hat \g_{i,3}$ that are
all exactly 1/2 in the last coordinate and have other coordinates
lying in $[0,1/2]$ obtained by rescaling each of
$\bar\g_{i,1},\bar\g_{i,2},\bar\g_{i,3}$ by twice its third
coordinate.  By rescaling $B$ in a reciprocal manner, we
find that there is a nonnegative matrix $\hat B$ such that
$$\left(\hat\g_{i,1},\hat\g_{i,2},\hat\g_{i,3}\right)\cdot \hat B
=\left(
\begin{array}{cccc}
0 & 1/2 & 1/4 & 1/4 \\
1/4 & 1/4 & 1/8 & 3/8 \\
1/2 & 1/2 & 1/2 & 1/2
\end{array}
\right).$$
By consider the third row of the above system of equations, we conclude
that each column of $\hat B$ sums to exactly 1.  Then dropping the
third row on both sides yields the equation
$$\left(\hat\g_{i,1}(1:2),\hat\g_{i,2}(1:2),\hat\g_{i,3}(1:2)\right)\cdot \hat B
=\left(
\begin{array}{cccc}
0 & 1/2 & 1/4 & 1/4 \\
1/4 & 1/4 & 1/8 & 3/8 
\end{array}
\right),$$
where the notation $\v(1:2)$ denotes the first two entries of a vector.
Now we observe that this is precisely a half-sized version of
the instance of INTERMEDIATE
SIMPLEX described in the preliminary lemma of this section, namely, find
three points lying in $[0,1/2]^2$ whose convex hull covers the four points
$\{(0,1/4),(1/2,1/4),(1/4,1/8),(1/4,3/8)\}$.  As established
by the lemma, there are precisely two
solutions to this system, which we will
denote $T_0/2$ and $T_1/2$.
Let $C_0$ be the set of $i$'s such that
the triangle defined by 
$(\hat\g_{i,1}(1:2),\hat\g_{i,2}(1:2),\hat\g_{i,3}(1:2))$
is $T_0/2$, while $C_1$ is the set of $i$'s such that this triangle
is $T_1/2$.  
Thus we conclude that for $i\in C_0$,
\begin{equation}
(\bar\g_{i,1},\bar\g_{i,2},\bar\g_{i,3})=
(\mu_{i,1}(0,0,1),\mu_{i,2}(0,1,1),\mu_{i,3}(1,1/2,1)),
\label{eq:gc0}
\end{equation}
and for $i\in C_1$,
\begin{equation}
(\bar\g_{i,1},\bar\g_{i,2},\bar\g_{i,3})=
(\mu_{i,1}(1,0,1),\mu_{i,2}(1,1,1),\mu_{i,3}(0,1/2,1)),
\label{eq:gc1}
\end{equation}
where $\mu_{i,k}>0$ for $k=1,2,3$.
This determines the $x_i$ entries of $\g_{i,k}$, $k=1,2,3$, and
the remaining $x_j$ entries are zeros since $\g_{i,k}$ is $x_i$-positive.
Therefore, it remains only to determine the
$v_j$ entries of $\g_{i,k}$, $k=1,2,3$.  There are several constraints
on these entries as follows.  First, we have the inequalities
$v_j\ge 0$, and thus all those entries must be nonnegative.
Next, we have the constraints 
$s_i-2t_i \le v_j$ whenever $\tilde x_i\in c_j$ and
$2t_i-2s_i-u_i \le v_j$ whenever $x_i\in c_j$.
These inequalities are redundant whenever their left-hand side is
nonpositive since we have already constrained $v_j\ge 0$. 
Thus, we need only consider the cases when the left-hand sides are
positive.  We
see that the left-hand side of the first inequality
$s_i-2t_i \le v_j$ 
is positive only in the case of
$\bar\g_{i,1}$ only for $i\in C_1$, and the left-hand side
of the second inequality
$2t_i-2s_i-u_i \le v_j$ 
is positive only in the case of
$\bar\g_{i,2}$ only for $i\in C_0$.  
Thus, for $i\in C_1$, 
for all $j$
such that $\tilde x_i$ occurs as a literal in clause $c_j$,
we must have 
\begin{equation}
\g_{i,1}|_{v_j}\ge \mu_{i,1}.
\label{eq:gc1vj}
\end{equation}
(Here, the notation $\g_{i,1}|_{v_j}$ denotes the $v_j$ coordinate
entry of $\g_{i,1}$.)
Similarly, for $i\in C_0$, 
for all $j$ such that $x_i$ occurs as a literal in clause $c_j$,
we must have 
\begin{equation}
\g_{i,2}|_{v_j}\ge\mu_{i,2}.
\label{eq:gc0vj}
\end{equation}

Next, $T$ must contain the point $\b$ from \eref{eq:Sdef}, so
there must be coefficients
$\alpha_{i,k}$, $i=1,\ldots,p$, $k=1,2,3$ and $\theta_j$, $j=1,\ldots,q$ adding
up to at most 1 and all nonnegative such that
\begin{equation}
\b=\sum_{i=1}^p\sum_{k=1}^3\alpha_{i,k}\g_{i,k}+
\sum_{j=1}^q \theta_j\lambda_j\h_j.
\label{eq:bsum}
\end{equation}
Fix a particular $i$.  The projection of $\b$
into $x_i$ coordinates
is $\bar\b=(1/(4p),1/(4p),1/(2p))$.  Referring back
to \eref{eq:gc0} and \eref{eq:gc1}, one can see that
regardless of whether $i\in C_0$ or
$i\in C_1$, $\bar\b$ is
expressed uniquely as $\bar\b=\bar\g_{i,1}/(8p\mu_{i,1})+
\bar\g_{i,2}/(8p\mu_{i,2})+\bar\g_{i,3}/(4p\mu_{i,3})$.
Therefore, 
\begin{equation}
\alpha_{i,1}=1/(8p\mu_{i,1});\quad\alpha_{i,2}=1/(8p\mu_{i,2});\quad
\alpha_{i,3}=1/(4p\mu_{i,3}).
\label{eq:alpha}
\end{equation}
Suppose $i\in C_0$.  Then for each $j$ such that $x_i$ occurs
as a literal in clause $c_j$, if we combine  \eref{eq:gc0vj} and
\eref{eq:alpha}, we obtain 
$$
\left.\sum_{k=1}^3\alpha_{i,k}\g_{i,k}\right|_{v_j}\ge 1/(8p).
$$
The identical inequality holds when $i\in C_1$ and
$\tilde x_i\in c_j$.

Now, sum the preceding inequality for $i=1,\ldots,p$ to obtain
\begin{equation}
\left.\sum_{i=1}^p
\sum_{k=1}^3\alpha_{i,k}\g_{i,k}\right|_{v_j}\ge m_j/(8p),
\label{eq:bsumvj}
\end{equation}
where $m_j$ is the number of literals $x_i\in c_j$ 
with $i\in C_0$ plus the number of literals $\tilde x_i\in c_j$ 
with $i\in C_1$.
Let us now combine these inequalities:
From \eref{eq:Sdef}, $\b|_{v_j}=2.5/(8p)$.
From \eref{eq:bsum}, 
$$\left.\b|_{v_j}\ge \sum_{i=1}^p
\sum_{k=1}^3\alpha_{i,k}\g_{i,k}\right|_{v_j},$$
since the last term of \eref{eq:bsum} is nonnegative.
Finally, from \eref{eq:bsumvj}, the above summation is 
at least $m_j/(8p)$.  Thus, we conclude that $m_j\le 2.5$.  Since
$m_j$ is integral, this means $m_j\le 2$.  
Let $\sigma$
be the setting of the $x_i$'s in the 3-SAT
instance defined by taking $x_i=1$ for 
$i\in C_1$ and $x_i=0$ for $i\in C_0$.  
Then if $x_i\in c_j$ and $i\in C_0$, this literal is falsified
in the clause.
Similarly, if $\tilde x_i\in c_j$ and $i\in C_1$, then this
literal is also falsified.  In other words, $m_j$ is the number
of literals in clause $c_j$ falsified by assignment $\sigma$.
We have just argued that $m_j\le 2$ for all
$j=1,\ldots,q$. In other words, for each clause, there are most two
literals falsified by assignment $\sigma$.  Therefore, $\sigma$
is a satisfying assignment for the 3-SAT instance.

Summarizing, we have proved that if there is a 
simplex $T$ solving the instance of 
INTERMEDIATE SIMPLEX, then there are exactly three vertices of $T$ that are
$x_i$-positive for each $i=1,\ldots,p$; that, based on these vertices,
$i$ can be classified as either $C_0$ or $C_1$; and that
the assignment $\sigma$ of the boolean variables in the original
3-SAT instance derived from $C_0$ and $C_1$ must be a satisfying
assignment.

Conversely, suppose the 3-SAT instance has a satisfying assignment.
The vertices of $T$ will be $\bz,\lambda_1\h_1,\ldots,\lambda_q\h_q$
together with $\g_{i,1},\g_{i,2},\g_{i,3}$ for each $i=1,\ldots,p$,
defined as follows.
Let $C_0$ index the variables set to 0 by the satisfying assignment and
$C_1$ the variables set to 1.  Define 
$\bar\g_{i,1},\bar\g_{i,2},\bar\g_{i,3}$ as in \eref{eq:gc0}
and \eref{eq:gc1} according
to $C_0$ and $C_1$.  Take
$\mu_{i,k}=5/8$ for all $(i,k)$.  (Any other value slightly greater
than $1/2$ will work.)
When $i\in C_0$ and $x_i$ is a literal in $c_j$, then take
$\g_{i,2}|_{v_j}=5/8$.  When $i\in C_1$ and $\tilde x_i$ is a literal
in $c_j$, then take $\g_{i,1}|_{v_j}=5/8$.  In all other cases,
take $\g_{i,k}|_{v_j}=0$.  It is easy to see that all the inequalities
defining $P$ are satisfied by these choices.  Furthermore, all the
points in $S$ are covered by convex combinations of the $3p+q+1$ points
$\bz,\lambda_1\h_1,\ldots,\lambda_q\h_q,\g_{1,1},\ldots,\g_{p,3}$,
which are the vertices of $T$.

For example, the point
$\r_i^1=(\bz, \e_i/4, \e_i/2,\e)$ in the case that $i\in C_0$ is
expressed as $(2/5)\g_{i,1}+(2/5)\g_{i,2}+\h$, where $\h$ is some
linear combination of $\lambda_1\h_1,\ldots,\lambda_q\h_q$ chosen to make the
$v_j$ entries each equal to 1.  (Note that the $v_j$ entries
of $(2/5)\g_{i,1}+(2/5)\g_{i,2}$ before $\h$ is added will be
either 0 or $1/4$).  The total sum of the coefficients to express
$(\bz, \e_i/4, \e_i/2,\e)$ is $2/5+2/5+h_1$, where $h_1$ is
the sum of the coefficients needed in the terms of $\h$.  
Select $\lambda_1,\ldots,\lambda_q$  to be large
scalars so that
we can be assured that $4/5+h_1\le 1$.  If this sum
is less than 1, then we include a contribution of $\bz$, another
vertex of $T$, in the linear combination to make the sum of
coefficients exactly 1.

Similarly, as sketched out earlier, to obtain the point
$\b=(\e/(4p), \e/(4p),\e/(2p), 2.5\e/(8p))$ in the hull
of the vertices of $T$, we use \eref{eq:bsum} with coefficients
chosen according to \eref{eq:alpha}.
This choice of $\alpha_{i,j}$'s yields $x_i$ coordinate entries
equal to $(1/(4p),1/(4p),1/(2p))$ for each $i$ and
has entries less
than or equal to $2/(8p)$ in each $v_j$ coordinate entry.  Then, as
above, one can include additional terms involving $\bz$ and 
$\lambda_1\h_1,\ldots,\lambda_q\h_q$
to complete the convex combination.  One point to note is that the sum
of the $\alpha_{i,k}$ coefficients appearing in \eref{eq:bsum}, assuming
$\mu_{i,k}=5/8$, is equal to $4/5$, and hence does not exceed 1.
Addition of 
the $\theta_j$ coefficients will make the total higher but still
less than 1 provided $\lambda_1,\ldots,\lambda_q$ are all chosen
to be very large.
\end{proof}

\section{Local-search heuristics}
\label{sec:local}
In this section we will prove a theorem about the INTERMEDIATE
SIMPLEX problem that will suggest a class of local-search heuristics.
The theorem is as follows.

\begin{theorem}
Consider an instance of INTERMEDIATE SIMPLEX given by polytope 
$P\subset\R^{k-1}$ with $n$ facets
and point set $S\subset P$ with $m$ vectors.  
Suppose there exists a solution
$T$, and suppose that all vertices of $T$
are given except for one.  Then the set of feasible positions for the
last vertex is defined by a system of linear equations and inequalities
($mk$ equalities and $n+mk$ inequalities).
\end{theorem}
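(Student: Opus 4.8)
The plan is to set up coordinates for the unknown vertex and translate each of the two defining conditions of INTERMEDIATE SIMPLEX — namely $T\subset P$ and $S\subset T$ — into linear constraints on that unknown. Write the known vertices of $T$ as $\g_1,\ldots,\g_{k-1}\in\R^{k-1}$ and let $\g_k\in\R^{k-1}$ be the unknown vertex; I will think of $\g_k$ as a vector of $k-1$ real unknowns. The facet description $P=\{\x: A\x\ge\b\}$ has $n$ rows. The condition $T\subset P$ is equivalent to requiring that every vertex of $T$ lie in $P$ (since $P$ is convex), and the constraints on $\g_1,\ldots,\g_{k-1}$ are already satisfied by hypothesis; so $T\subset P$ reduces to the single block $A\g_k\ge\b$, which is exactly $n$ linear inequalities in the unknown.

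Next I would handle $S\subset T$. As in the equivalence proof of \sref{sec:poly}, a point $\x_\ell\in\R^{k-1}$ lies in $T=\hull\{\g_1,\ldots,\g_k\}$ iff there is a coefficient vector $\q_\ell\in\R^k$ with $\q_\ell\ge\bz$, $\e^T\q_\ell=1$, and $[\g_1,\ldots,\g_k]\q_\ell=\x_\ell$; equivalently, appending a row of $1$'s, $G\q_\ell=[\x_\ell;1]$ with $G$ the $k\times k$ matrix whose columns are $[\g_i;1]$. The twist here is that $G$ is no longer invertible-with-known-inverse: its last column $[\g_k;1]$ contains unknowns. So I would instead treat the entries of $\q_\ell$ as additional unknowns. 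For each of the $m$ points of $S$ this introduces $k$ new real unknowns $\q_\ell$ and imposes: the $k$ linear equations $G\q_\ell=[\x_\ell;1]$ (linear in the joint unknowns $(\g_k,\q_\ell)$ because the product $\g_k\,(\q_\ell)_k$ is the only nonlinearity — and here is where I have to be careful), together with the $k$ sign constraints $\q_\ell\ge\bz$. Counting: $m$ points times $k$ equalities gives $mk$ equalities, and $n$ (from $T\subset P$) plus $m k$ (the nonnegativity of all the $\q_\ell$) gives $n+mk$ inequalities, matching the statement. The normalization $\e^T\q_\ell=1$ is absorbed into the last row of $G\q_\ell=[\x_\ell;1]$.

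The main obstacle is precisely the bilinear term I flagged: the equation $[\g_1,\ldots,\g_k]\q_\ell=\x_\ell$ contains the product of the unknown vertex $\g_k$ with the unknown coefficient $(\q_\ell)_k$, so at first glance the system is quadratic, not linear. The fix is a standard substitution: introduce, for each $\ell$, a new vector unknown $\z_\ell := (\q_\ell)_k\,\g_k\in\R^{k-1}$, so that the first $k-1$ rows of $G\q_\ell=[\x_\ell;1]$ become $\sum_{i=1}^{k-1}(\q_\ell)_i\,\g_i+\z_\ell=\x_\ell$, which is linear, while the last row $\e^T\q_\ell=1$ is already linear. One then needs the linking relations between $\z_\ell$ and $(\g_k,\q_\ell)$; the clean way is to observe that the unknown vertex $\g_k$ can itself be recovered as $\g_k=\z_\ell/(\q_\ell)_k$ whenever $(\q_\ell)_k>0$, and since $S$ affinely spans $\R^{k-1}$ at least one such $\ell$ exists, making $\g_k$ (and hence every $\z_\ell=(\q_\ell)_k\g_k$) a linear image of the $\z$'s and $\q$'s; I would pick one reference index $\ell_0$ with $(\q_{\ell_0})_k$ forced positive, treat $\g_k:=\z_{\ell_0}$ as shorthand after rescaling so $(\q_{\ell_0})_k=1$ (legitimate since the $\q_\ell$ only need to be determined up to the affine normalization already present), and impose $\z_\ell=(\q_\ell)_k\,\z_{\ell_0}$ as the remaining bilinear-looking but now rank-one constraints — and then note these too linearize once $\z_{\ell_0}$ is the variable, because the product is (scalar unknown)$\times$(vector unknown) with the same scalar pattern as before. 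After this change of variables the whole feasibility region for the last vertex is cut out by the advertised $mk$ equalities and $n+mk$ inequalities, and because $T$ was assumed to exist the system is consistent; its solution set projects onto exactly the set of admissible positions for $\g_k$. I would close by remarking that consequently this region is a (possibly empty, here nonempty) polyhedron that can be searched by linear programming, which is the point the section is driving at.
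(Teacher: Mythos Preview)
Your setup is correct, your constraint count is right, and you have correctly isolated the only genuine difficulty: the bilinear term $(\q_\ell)_k\,\g_k$. But your linearization of that term does not go through.

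The substitution $\z_\ell:=(\q_\ell)_k\,\g_k$ does make the equations $\sum_{i<k}(\q_\ell)_i\g_i+\z_\ell=\x_\ell$ linear, but it transfers the nonlinearity to the linking constraints: you now need all of the $\z_\ell$ to be scalar multiples of a \emph{single} unknown vector $\g_k$, i.e.\ $\z_\ell=(\q_\ell)_k\,\g_k$ for every $\ell$. Choosing a reference index $\ell_0$ and writing $\g_k=\z_{\ell_0}$ still leaves you with $\z_\ell=(\q_\ell)_k\,\z_{\ell_0}$, a product of the scalar unknown $(\q_\ell)_k$ with the vector unknown $\z_{\ell_0}$; that is exactly the same bilinearity you started with, just relabelled. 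Your rescaling remark (``set $(\q_{\ell_0})_k=1$'') is not legitimate either: the $\q_\ell$ are convex-combination coefficients constrained by $\e^T\q_\ell=1$, so they are \emph{not} free up to scale; forcing $(\q_{\ell_0})_k=1$ together with $\e^T\q_{\ell_0}=1$ and $\q_{\ell_0}\ge\bz$ would force $\g_k=\x_{\ell_0}$, which is absurd.

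The paper's fix is simpler and applied \emph{per point}, not via a single reference index. For each $\x_\ell$ not already in $\hull\{\g_1,\ldots,\g_{k-1}\}$ one has $(\q_\ell)_k>0$, so divide the whole convex-combination system for that $\ell$ by $(\q_\ell)_k$. With $\alpha_i:=(\q_\ell)_i/(\q_\ell)_k$ for $i<k$ and $\alpha^*:=1/(\q_\ell)_k$, the constraints become
\[
\alpha_1\g_1+\cdots+\alpha_{k-1}\g_{k-1}+\g_k=\alpha^*\x_\ell,\qquad
\alpha_1+\cdots+\alpha_{k-1}+1=\alpha^*,\qquad
\alpha_i\ge 0,\ \alpha^*\ge 0,
\]
which are genuinely linear in $(\g_k,\alpha_1,\ldots,\alpha_{k-1},\alpha^*)$ because $\g_k$ now appears with coefficient~$1$. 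This gives $k$ equalities and $k$ inequalities for each of the $m$ points, plus the $n$ inequalities $A\g_k\ge\b$, matching the statement. The key idea you were missing is that dividing by the last barycentric coordinate, separately for each point of $S$, removes the bilinearity without introducing any cross-point coupling.
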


\begin{proof}
Let the vertices of $T$ be denoted $\v_1,\ldots,\v_k$, and suppose
all are known except $\v_k$.  Two sets of constraints must be
satisfied, namely, those arising from the requirement $S\subset T$
and those arising from $T\subset P$.  Since the simplex $T$ is
assumed to be a solution, the given values of $\v_1,\ldots,\v_{k-1}$
must all lie in $P$, and hence the constraint on $\v_k$ to ensure
that $T\subset P$ is simply that $\v_k\in P$.  This clearly amounts
to a set of $n$ linear inequalities that must be satisfied 
by $P$.  

Next, consider the requirement $S\subset T$; choose a particular
vector $\b\in S$.  If $\b$ is in the hull of $\v_1,\ldots,\v_{k-1}$
then $\b$ is in $T$ no matter what choice is made for $\v_k$, so such
a $\b$ does not impose any constraint on $\v_k$.  Else suppose
$\b$ is not in the hull of $\v_1,\ldots,\v_{k-1}$.  Then the requirement
on $\v_{k}$ is that  there exist $\lambda_1,\ldots,\lambda_k$
such that $\lambda_1\v_1+\cdots+\lambda_k\v_{k}=\b$, 
$\lambda_i\ge 0$, $i=1,\ldots,k$, and 
$\lambda_1+\cdots+\lambda_k=1$.  This constraint is nonlinear because
of the product of unknowns $\lambda_k\v_k$.  However, we can rearrange
it into a linear constraint by dividing through by $\lambda_k$
(which is nonzero by the hypothesis that $\b$ is not in the hull
of $\v_1,\ldots,\v_{k-1}$) and defining new variables 
$\alpha_i=\lambda_i/\lambda_k$, $i=1,\ldots,k-1$, and
$\alpha^*=1/\lambda_k$.  Then the above constraints become
$\alpha_1\v_1+\cdots+\alpha_{k-1}\v_{k-1}+\v_k=\alpha^*\b$, 
$\alpha_i\ge 0$, $i=1\ldots,k-1$, $\alpha^*\ge 0$, 
$\alpha_1+\cdots+\alpha_{k-1}+1=\alpha^*$, which are all linear.
There are $k$ equality constraints and $k$ inequality constraints in
this system.  A system of this kind is needed for each point in $S$.        
\end{proof}

The preceding theorem suggests a local search heuristic for INTERMEDIATE
SIMPLEX.
One can choose as an initial guess $T$ a large simplex that contains
all of $S$ but perhaps is not contained in $P$.  Then one adjusts
the vertices of $T$ one at a time, optimizing a criterion that
minimizes departure of the vertex from feasibility.  Because the
feasible positions for the vertex under consideration form a polyhedron,
several possible criteria such as 2-norm distance to feasibility
would constitute convex programming problems.  Thus, on each iteration
of the local search algorithm, one could reposition a single vertex
of $T$ optimally until a solution is found.

\section{Acknowledgement}
The author acknowledges helpful discussions about this
material with Levent Tuncel
and Ali Ghodsi (University
of Waterloo), Jon Kleinberg  and  Lillian Lee (Cornell),
Chris Ding (Lawrence Berkeley Laboratory) and Vincent Blondel
(Louvain).  

\bibliography{../../Bibfiles/nips,../../Bibfiles/nips1}
\bibliographystyle{plain}
\end{document}